\let\oldmarginpar\marginpar
\renewcommand\marginpar[1]{\-\oldmarginpar[\raggedleft\footnotesize #1]%
{\raggedright\footnotesize #1}}
\theoremstyle{plain}
\newtheorem{thm}{Theorem}[section]
\newtheorem{lemma}[thm]{Lemma}
\theoremstyle{definition}
\theoremstyle{remark}
\numberwithin{equation}{section}
\newcommand{\N}{\mathbb{N}}
\newcommand{\R}{\mathbb{R}}
\newcommand{\C}{\mathbb{C}}
\newcommand{\la}{\lambda}
\renewcommand{\a}{\alpha}
\newcommand{\sse}{\subseteq}
\newcommand{\Aut}{\operatorname{Aut}}
\newcommand{\std}{\text{st}}
\begin{document}
\begin{abstract}
We show that there exists a transverse link in the standard contact structures on the 3--sphere such that all contact 3--manifolds are contact branched covers over this transverse link. 

Figure 1 in this version of the paper differs from the published version, but the paper is otherwise unchanged. 
\end{abstract}

\title{Transverse universal links}
\subjclass[2010]{Primary: 53D10. Secondary: 53D15, 57R17.}

\author{Roger Casals}
\address{Massachusetts Institute of Technology, Department of Mathematics, 77 Massachusetts Avenue Cambridge, MA 02139, USA}
\email{casals@mit.edu}
\author{John B.~Etnyre}
\address{School of Mathematics, Georgia Institute
of Technology, 686 Cherry Street,  Atlanta, GA 30332-0160, USA}

\email{etnyre@math.gatech.edu}

%\address{School of Mathematics, Georgia Institute of Technology}
%\email{etnyre@math.gatech.edu}

\maketitle
% \vspace{-1.5cm}
\section{Introduction}\label{sec:intro}
In 1982, William Thurston showed that there exists a six component link in the 3--sphere such that any closed smooth oriented 3--manifold is the branched cover over the 3--sphere with branch locus this link \cite{ThurstonUniversalPrep}. He called such a link universal. Later H.~Hilden, M.~Lozano, and J.M.~Montesinos showed that there exists a universal knot in the 3--sphere \cite{HildenLozanoMontesinos83a, HildenLozanoMontesino85}. Since then there have been several articles showing certain knots and links are universal or not universal. For example the figure eight knot, Whitehead link, and Borromean rings are all universal \cite{HildenLozanoMontesinos85b, HildenLozanoMontesinos83b}. 

In 2002, Emmanuel Giroux showed that any contact 3--manifold is a 3--fold simple branched cover of the 3--sphere with the standard contact structure $(S^3,\xi_{std})$ and branch locus a transverse link \cite{Giroux02}. This is a contact strengthening of the Hilden-Montesinos Theorem \cite{Hilden74,Montesinos76} for smooth 3--manifolds. These contact constructions are useful for constructing open books for some contact 3--manifolds and embedding contact 3--manifolds in the standard contact structure on the 5--sphere \cite{EtnyreFurukawa17}.

\begin{figure}[h!]
\tiny
\begin{overpic}%[grid,tics=10] 
{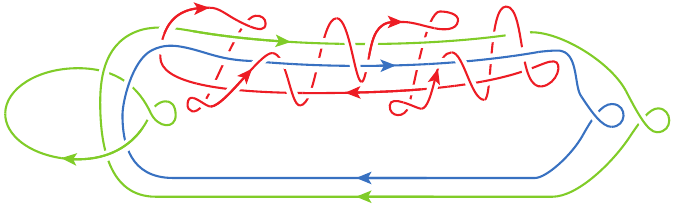}
%\put(-5,72){$r=$}
\end{overpic}
\caption{The universal transverse link in Theorem \ref{thm:univlink}.}
\label{tul}
\end{figure}

In view of the work of W.~Thurston and \cite{HildenLozanoMontesinos83a, HildenLozanoMontesino85}, it is a natural question in low-dimensional contact topology to ask whether there exists a universal transverse link in $(S^3,\xi_{std})$. That is, a transverse link $L\subseteq(S^3,\xi_{std})$ such that {\it any} contact 3--manifold is the contact branched cover of $(S^3,\xi_{std})$ along $L$. This question was first considered by M.~Casey in \cite{Casey13} where she showed that no transverse knot in the knot type of the figure eight can be (contact) universal and that many covers of the Whitehead link and Borromean rings yield only overtwisted contact structures, raising doubts as to whether or not there exist universal transverse links. The main result in the present work is that universal transverse links exist:

\begin{thm}\label{thm:univlink}
Any contact 3-manifold $(M,\xi)$ can be realized as a contact branched cover of $(S^3,\xi_{std})$ branched along the transverse link $L$ shown in Figure~\ref{tul}.
\end{thm}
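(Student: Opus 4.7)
My plan is to combine Giroux's $3$--fold simple branched cover theorem with a contact version of the Hilden--Lozano--Montesinos strategy for producing universal links, realising arbitrary transverse branch loci via iterated contact branched covers.

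Step 1 (Giroux reduction). Apply Giroux's theorem to write an arbitrary contact $3$--manifold $(M,\xi)$ as a $3$--fold simple contact branched cover
\[
\pi \colon (M,\xi) \longrightarrow (S^3,\xi_{std})
\]
branched along some transverse link $K \subset (S^3,\xi_{std})$. Since the composition of two contact branched covers is again a contact branched cover, it suffices to produce, for each such $K$, a contact branched cover $p \colon (S^3,\xi_{std}) \to (S^3,\xi_{std})$ branched along the universal link $L$ of Figure~\ref{tul} whose upstairs branch curve $p^{-1}(L)$ contains $K$ as a transverse sublink. Composing $\pi$ with $p$ then expresses $(M,\xi)$ as a contact branched cover of $(S^3,\xi_{std})$ along $L$.

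Step 2 (Transverse braid encoding). By the transverse Markov theorem of Wrinkle and Orevkov--Shevchishin, put $K$ in the form of a closed transverse braid around a transverse unknot $U \subset (S^3,\xi_{std})$. Design $L$ so that $U$ is transversely isotopic to a distinguished component of $L$, and so that the remaining components of $L$ encode ``braid generators'': namely, cyclic contact branched covers along suitable subsets of $L$ should insert local positive crossings (generators $\sigma_i$) into the preimage of $U$. Iterating these local contact branched covers according to a positive braid word producing $K$ then yields the cover $p$ required in Step 1, in which $K$ appears as a transverse sublink of $p^{-1}(L)$.

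Step 3 (Main obstacle). The heart of the argument is the verification that the Hilden--Lozano--Montesinos style construction can be performed entirely in the contact--transverse category. This requires that every intermediate branched cover of $(S^3,\xi_{std})$ along a sublink of $L$ be contactomorphic to $(S^3,\xi_{std})$ (in particular tight, not merely a topological $S^3$), that the successive branch curves remain transverse with appropriately chosen monodromy representations $\pi_1(S^3 \setminus L) \to S_n$, and that transverse braid conjugation and positive braided (Markov) stabilization can be realised as contact--geometric operations on covers along $L$. I expect this contact rigidity to be the main difficulty: purely topologically the argument is flexible, but keeping the total space standardly contact and the branch locus transverse at each stage constrains both the components of $L$ and the permitted monodromies quite severely, and is presumably what forces the specific combinatorics of the link in Figure~\ref{tul}. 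Once these contact--geometric controls are in place, the composition of Steps 1 and 2 yields the theorem.
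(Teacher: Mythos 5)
Your high-level skeleton (Giroux's Theorem~\ref{giroux} plus a composition with a self-cover of $(S^3,\xi_{std})$ so that the branch data sits over a fixed link, as in Lemma~\ref{composition}) is the right one, but the content of Steps 2--3 is where the actual work lies, and as proposed it would fail. You attempt to realize the Giroux branch locus $K$ itself (after braiding it and assuming a \emph{positive} braid word) as a transverse sublink of $p^{-1}(L)$, and you explicitly defer the contact rigidity issues to an unresolved ``main obstacle''. Two concrete problems: first, $K$ need not be a positive braid, and in the transverse category negative crossings cannot be removed by isotopy, so ``iterating covers according to a positive braid word producing $K$'' does not apply to a general $K$; second, realizing an \emph{arbitrary} transverse braid inside the preimage of one fixed transverse link under covers whose total space stays standard is precisely the hard point, and no mechanism is offered for it. Moreover, iterated ``generator-insertion'' covers only compose into a single cover branched along $L$ if each successive branch set lies over $L$ and each intermediate total space is again $(S^3,\xi_{std})$; none of this is addressed.

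The paper's route avoids all of this by never putting $K$ upstairs. Instead it \emph{changes the branch locus while keeping the covered contact manifold fixed}, using moves on the monodromy-labelled diagram: a negative crossing with three distinct transposition labels can be traded for two positive crossings (the ball containing either tangle has preimage a union of tight balls, the move from Casey's thesis, rows three/four of Figure~\ref{isotopy}), and a split maximal self-linking unknot can be added to the branch locus while raising the degree by one without changing the cover (Lemma~\ref{add}, which rests on Lemma~\ref{firstbc}). These moves normalize any Giroux branch braid into the two-parameter family $L_{m,n}$ of Figure~\ref{norm} (trivial $n$-braid plus $m$ columns of clasping unknots); this is Lemma~\ref{semiuniversal}. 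Then the required self-cover is completely elementary: the cover of $S^3$ branched over the Hopf link $H$ that unwinds the two directions of $T^2\times(0,1)\cong S^3-H$ pulls back $\xi_{std}$ to $\xi_{std}$ and pulls one fixed link $L'\subset S^3-H$ (Figure~\ref{torusuniv}) back to a link containing $L_{m,n}$ (Lemma~\ref{setup}); the universal link of Figure~\ref{tul} is $H\cup L'$. So the missing idea in your proposal is the normalization of the branch locus by cover-preserving but link-changing moves, which is what replaces the unachievable goal of realizing $K$ itself over a fixed link and removes the need for any tower of covers or positivity assumption.
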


The proof presented here directly adapts the argument for the existence of topologically universal links in \cite{HildenLozanoMontesinos83a} to the contact case. It is interesting to note that similar constructions of universal links that lead to universal knots cannot be easily adapted to the contact setting, leaving open the natural question 
%of whether there exists a universal transverse knot in $(S^3,\xi_{std})$. So we ask the natural question

\begin{itemize}
 \item[(a)] Does there exist a universal transverse knot $K\subseteq (S^3,\xi_{std})$ ?
\end{itemize}

The Figure-8 knot is a smooth universal knot, but it cannot be a contact universal knot after the results from M.~Casey's thesis \cite{Casey13}. This leads to the following problem:

\begin{itemize}
 \item[(b)] Find sufficient conditions for a smooth universal knot or link to admit transverse representatives which are contact universal.
\end{itemize}

In line with the first question, we would also like to know the answer to the following question:
\begin{itemize}
 \item[(c)] Let $(S^3,\xi_{ot})$ be an overtwisted contact structure. Does there exist a transverse knot $K\subseteq (S^3,\xi_{ot})$ such that the contact branched cover of $(S^3,\xi_{ot})$ along $K$ is contactomorphic to $(S^3,\xi_{std})$ ?
\end{itemize}

It is possible to produce branched covers of overtwisted structures on $S^3$ which are tight {\em cf.\ }\cite{Casey13}, yet it is apparent that such $K$ in the question --- should it exist --- cannot be a smooth unknot.

Finally, a natural continuation of Theorem~\ref{thm:univlink}, in line with \cite{LoiPiergallini01,PiergalliniZuddas05}, would be to answer the following question:

\begin{itemize}
 \item[(d)] Does there exist a symplectic surface $S\subseteq (D^4,\la_{std})$ such that any Weinstein 4-fold $(W,\la)$ is a branched cover of $(D^4,\la_{std})$ along $S$ ?
\end{itemize}

{\bf Acknowledgements:} We thank Eric Stenhede, Jesús Rodríguez-Viorato, and Sebastian Zapata-Rendón for pointing out an error in the original version of Figure 1. We thank the referee for helpful comments that improved the paper. The first author is supported by the NSF grants DMS-1608018 and, subsequently, DMS-1841913, and a BBVA Research Fellowship. He also thanks Ad\'an Medrano Mart\'in del Campo for useful conversations. The second author was partially supported by NSF grant DMS-1608684.

\section{Background}
In this section we review the necessary definitions on transverse knots and branched covers which we will use in our proof of Theorem \ref{thm:univlink}. We also show, in the last subsection, that many standard modifications of the branch locus for branched covering maps that are known in the topological setting also hold in the contact geometric setting as well. 

\subsection{Transverse knots}\label{transknots}
For more details on transverse knots the reader is referred to \cite{Etnyre05}. Here we briefly review the aspects that will be relevant for our main result. 

We will consider our knots in $(\R^3,\xi_{std})\subset (S^3,\xi_{std})$ where on $\R^3$ we have the standard contact structure
\[
\xi_{std}=\ker (dz-y\, dx),
\]
and $(x,y,z)$ are Cartesian coordinates on $\R^3$. A knot $K$ is {\em transverse} if $K$ is transverse to $\xi_{std}$ at every point of $K$. Since $\xi_{std}$ is co-oriented by the contact form and $\R^3$ is oriented, $K$ will have an orientation induced on it so that $K$ and $\xi_{std}$ intersect positively. 

We will study transverse knots via their {\it front projection}, that is, via the image of $K$ under the projection $\pi:\R^3\to \R^2: (x,y,z)\mapsto (x,z)$. Notice that being transverse to $\xi_{std}$ implies that the $y$--coordinate of $K$ satisfies
\[
y< \frac{dz}{dx},
\] 
where we think of the page as the $xz$-plane so that the positive $y$-axis points into the page. It is easy to see that any diagram for an oriented knot in the $xz$-plane determines a transverse knot up to isotopy through transverse knots as long as no portion of the knot is as shown in Figure~\ref{forbid}.
\begin{figure}[h]
\tiny
\begin{overpic}%[grid,tics=10] 
{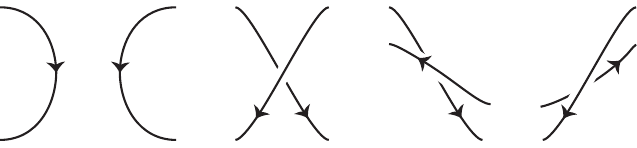}
%\put(-5,72){$r=$}
\end{overpic}
\caption{Forbidden portions of the front diagram of a transverse knot.}
\label{forbid}
\end{figure}  
Moreover, Type II and III Reidemeister moves are allowed in front diagrams as long as no portion of the move contains a forbidden diagram from Figure~\ref{forbid}. More specifically, there can be no vertical tangencies with the oriented tangent vector pointing down and there can be no crossings where the over strand has greater slope and is oriented right to left, while the under crossing is oriented left to right. 

In addition to the {\it front projections}, there is a different presentation of transverse knots that will be useful for us as well. Consider $S^3=\{(z_1,z_2)\in\C^2:|z_1|^2+|z_2|^2=1\}$ as the unit sphere in $\C^2$ with the standard contact structure $\xi_{std}=TS^3\cap i(TS^3)$ given as the set of complex tangencies to $S^3$. The link $H=\{z_1=0\}\cup \{z_2=0\}$ is a Hopf link with each component being an unknot with self-linking number $-1$. It is easy to check that the contact structure on the complement $S^3- H=T^2\times (0,1)$ is given by 
\[
\xi_{std}=\ker\left(\cos \left(\frac\pi2 t\right) \, d\phi + \sin \left(\frac\pi2 t\right) \, d\theta\right),
\] 
where $(\phi,\theta)$ are angular coordinates on $T^2=S^1\times S^1$ and $t$ is the coordinate on $(0,1)$. We can now consider transverse knots in $T^2\times (0,1)$ and their {\it front projection} will be obtained by projecting out the $t$-coordinate. Once again a transverse knots can be recovered from its front projection. The only real difference with the situation described above is that the tangent vector to the projection cannot have both $\phi$ and $\theta$ component negative. 

A closed transverse braid in $\R^3$ can always be assumed to lie in the thickened torus $T^2\times (0,1)$ and such braids are in one-to-one correspondence curves in $T^2\times (0,1)$ whose oriented tangents always have positive $\theta$-coordinate \cite{Bennequin83}. 

\subsection{Contact branched covers}
A {\em branched covering map} is a smooth map $p:M\to Y$ between smooth 3--manifolds such that there is a link $L\subseteq Y$, called the {\em branch locus}, such that $p$ restricted to $M-p^{-1}(L)$ is a covering map from $M-p^{-1}(L)$ to $Y-L$ and each component of $p^{-1}(L)$ has a neighborhood $N=S^1\times D^2$  such that $p(N)=S^1\times D^2$ and in these coordinates $p$ is of the form $(\phi,z)\mapsto (n\phi, z^m)$ for some integers $n$ and $m$ where $\phi$ is the angular coordinate on $S^1$ and $z$ is the complex coordinate on $D^2$ thought of as the unit disk in $\C$. We say the component has {\em order $m$} and we say $p$ is {\em ramified} at the component if $m>1$. An branched covering map of degree $k$ is called {\em simple} if the pre-image of any point has size either $k$ or $k-1$. 

Given a contact submanifold, it is quite easy to construct contact structures on branched covering spaces from a contact structure on the base space. 
\begin{thm}[Geiges 1997, \cite{Geiges97}; \"Ozt\"urk and Niederkr\"uger 2007, \cite{OzturkNiederkruger07}]
Let $p:M\to Y$ be a branched covering map with branch locus $L$. Given a contact structure $\xi$ on $Y$ with contact from $\alpha$ such that $L$ is transverse to $\xi$, then $p^*\alpha$ may be deformed by an arbitrarily small amount near $p^{-1}(L)$ to give a contact form defining a unique, up to contact isotopy, contact structure $\xi_L$ on $M$. 
\end{thm}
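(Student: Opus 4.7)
Away from $p^{-1}(L)$ the map $p$ is a local diffeomorphism, so $p^*\alpha$ is automatically a contact form there, and the task reduces to deforming it in a tubular neighborhood of $p^{-1}(L)$. First I would invoke the standard neighborhood theorem for transverse knots to identify a tubular neighborhood of each component $L_i\subseteq L$ with $S^1_\theta\times D^2_{r,\phi}$ so that $\alpha=d\theta+r^2\, d\phi$ and $L_i=\{r=0\}$ is oriented by $\partial_\theta$.

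On a component of $p^{-1}(L_i)$ of ramification order $m$, with the map covering the $\theta$-direction with degree $n$, pick coordinates $(\Theta,R,\Phi)$ upstairs so that $p$ has the form $\theta=n\Theta$, $r=R^m$, $\phi=m\Phi$. A direct computation gives
$$p^*\alpha \;=\; n\, d\Theta + m R^{2m}\, d\Phi, \qquad p^*\alpha \wedge d(p^*\alpha) \;=\; 2nm^2 R^{2m-1}\, d\Theta\wedge dR\wedge d\Phi,$$
so $p^*\alpha$ fails to be contact exactly along $R=0$ whenever $m>1$.

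To remedy this, replace the coefficient $mR^{2m}$ with a smooth function $g(R)$ that (i) equals $mR^{2m}$ for $R\geq \e$, (ii) equals $cR^2$ near $R=0$ for some $c>0$, and (iii) satisfies $g'(R)>0$ for $R>0$ so that $g'(R)/R$ extends continuously and positively across $R=0$. Then
$$\alpha_L \;=\; n\, d\Theta + g(R)\, d\Phi \qquad \text{satisfies}\qquad \alpha_L\wedge d\alpha_L \;=\; \tfrac{n g'(R)}{R}\, d\Theta\wedge dX\wedge dY,$$
which is a nowhere-vanishing volume form. Because $\alpha_L=p^*\alpha$ for $R\geq \e$, the deformation extends to a globally defined contact form on $M$, and the $\e$ parameter can be taken arbitrarily small.

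For uniqueness up to contact isotopy, note that any two admissible deformations $g_0,g_1$ can be joined by the convex combination $g_t=(1-t)g_0+tg_1$, which preserves conditions (i)--(iii). This produces a smooth 1-parameter family of contact forms $\alpha_{L,t}$, hence of contact structures $\xi_{L,t}$, on $M$, which all agree outside a compact set; Gray stability then supplies a contact isotopy taking $\xi_{L,0}$ to $\xi_{L,1}$. The main technical point to get right is the choice of deformation near the branch locus: one is forced into $R^2$ behaviour there (rather than the degenerate $R^{2m}$ inherited from the pullback) in order to make $g'(R)/R$ nonzero at $R=0$, and this is what restores the contact condition.
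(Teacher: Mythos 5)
This theorem is stated in the paper without proof --- it is quoted from \cite{Geiges97} and \cite{OzturkNiederkruger07} --- so your proposal should be measured against the standard argument in those references, and in outline it is exactly that argument: $p^*\alpha$ is contact away from the ramification locus, the degeneracy is concentrated along $p^{-1}(L)$, and one repairs the meridional coefficient there by a small perturbation, with uniqueness coming from convexity of the space of admissible perturbations plus Gray stability. Your model computation ($p^*\alpha = n\,d\Theta + mR^{2m}\,d\Phi$, and $\alpha_L\wedge d\alpha_L = \tfrac{n g'(R)}{R}\,d\Theta\wedge dX\wedge dY$) is correct as far as it goes.

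The gap is the simultaneous normalization you assert without proof: that there are coordinates in which the covering map is the standard model $(\theta,r,\phi)=(n\Theta,R^m,m\Phi)$ \emph{and} the contact form is $d\theta+r^2\,d\phi$. The transverse neighborhood theorem standardizes the contact \emph{structure} (so a conformal factor $f>0$ must be carried along --- harmless), but it does so in tubular--neighborhood coordinates that need not agree with the coordinates in which the branched covering is standard; the two identifications of the downstairs solid torus can differ by a core--preserving diffeomorphism, e.g.\ a meridional twist $(\theta,z)\mapsto(\theta,e^{ik\theta}z)$, and such a twist can be absorbed into the upstairs coordinates only when $m$ divides $kn$. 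In the contact--model coordinates the covering may therefore only take the twisted form $\theta=n\Theta$, $r=R^m$, $\phi=j\Theta+m\Phi$, giving $p^*\alpha=(n+jR^{2m})\,d\Theta+mR^{2m}\,d\Phi$. Your interpolation still succeeds --- replacing $mR^{2m}$ by $g(R)$ yields the coefficient $(n+jR^{2m})g'(R)-2mjR^{2m-1}g(R)$, which is positive for small neighborhoods and admissible $g$ --- but you must either carry out this more general computation or argue invariantly as in \cite{OzturkNiederkruger07}: add $\varepsilon$ times a cut-off multiple of $X\,dY-Y\,dX$ to $p^*\alpha$, using only that $p^*\alpha$ is nonvanishing along $p^{-1}(L)$ (transversality of $L$) and that $p^*(\alpha\wedge d\alpha)\ge 0$, with $\varepsilon$ small to preserve contactness elsewhere. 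Relatedly, your uniqueness argument only interpolates between two perturbations written in the \emph{same} coordinates and cut-off region; full independence of the choices (neighborhoods, coordinates, $\varepsilon$) requires applying the same convexity-plus-Gray argument to the whole space of admissible small perturbations, a routine but necessary supplement.
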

The starting point for our main result is Giroux's proof that every contact 3--manifold can be constructed as a branched cover over the standard contact structure on $S^3$.
\begin{thm}[Giroux 2002, \cite{Giroux02}]\label{giroux}
Given a contact structure $\xi$ on a 3--manifold $M$, there is a 3--fold simple branched covering map $p:M\to S^3$ such that $\xi$ is the contact structure induced on $M$ by $p$, the standard contact structure on $S^3$, and some transverse realization of the branch locus of $p$. 
\end{thm}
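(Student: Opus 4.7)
The plan is to prove the theorem via the Giroux correspondence between contact structures and open book decompositions. I would start with a compatible open book on $(M,\xi)$ with page $\Sigma$ and monodromy $\phi$, and model $(S^3,\xi_{std})$ by the open book with disk page $D^2$ and trivial monodromy, whose binding is the standard transverse unknot. The goal is to build a $3$-fold simple branched covering $p:M\to S^3$ that restricts, page by page, to a fixed branched covering $q:\Sigma\to D^2$, so the branch locus $L\subset S^3$ is automatically a closed braid with respect to the disk open book, hence transverse to $\xi_{std}$.

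First, I would invoke the classical surface statement: every compact orientable surface with boundary admits a $3$-fold simple branched covering $q:\Sigma\to D^2$ with finitely many interior branch points. Then, possibly after Giroux-stabilizing the open book of $(M,\xi)$ (which does not change $\xi$), I would arrange the monodromy $\phi$ to be equivariant with respect to $q$, in the sense that it descends to the identity on $D^2$. Since any supporting monodromy is a product of positive Dehn twists and $\mathrm{MCG}(D^2,\dd D^2)$ is trivial, sufficient stabilization enlarges $\Sigma$ by annuli on which the covering is standard and furnishes extra positive Dehn twists that lift to cancel any descended monodromy on $D^2$. This compatibility is exactly what allows the family $q_t:\Sigma_t\to D^2$ to glue across pages of the mapping torus into a global $3$-fold simple branched covering $p:M\to S^3$.

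Once $p$ is constructed, the theorem of Geiges and \"Ozt\"urk--Niederkr\"uger produces a contact structure $\xi_L$ on $M$ from $\xi_{std}$ and $L$. I would identify $\xi_L$ with $\xi$ as follows: choose a contact form for $\xi_{std}$ whose Reeb vector field is positively transverse to the disk pages of the model open book on $S^3$; pulling this form back by $p$ and applying the small perturbation near $p^{-1}(L)$ provided by the theorem, the resulting Reeb field is still positively transverse to the pages of the original open book on $M$ (away from the binding, where it is tangent). By the uniqueness part of the Giroux correspondence, any two contact structures supported by the same open book are isotopic, so $\xi_L$ is contact isotopic to $\xi$.

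The main obstacle is the equivariance step: arranging, after sufficient stabilization, that the compatible monodromy on $\Sigma$ descends through a fixed $3$-fold simple covering $q$ to the identity on $D^2$, while remaining a product of \emph{positive} Dehn twists so that the supporting property of the contact structure is preserved. This is the contact-geometric refinement of the Hilden-Montesinos argument and requires careful bookkeeping of which simple closed curves in $\Sigma$ lift from the disk (so their Dehn twists are equivariant) and of how each Giroux stabilization enlarges the covering data. Everything else in the argument is then formal, propagated from the page to the $3$-manifold by the open book structure.
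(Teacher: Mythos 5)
Your overall strategy coincides with the argument the paper recalls for Theorem~\ref{giroux}: support $(M,\xi)$ by an open book, cover the disk open book of $(S^3,\xi_{std})$ page by page by a $3$--fold simple branched cover so that the branch locus is a closed braid and hence transverse, and then identify the induced contact structure with $\xi$ by uniqueness of contact structures supported by a fixed open book. The difference is that the paper does not reprove the fiberwise statement: it quotes Montesinos-Amilibia and Morton \cite{Montesinos-AmilibiaMorton91} (building on Birman \cite{Birman79} and Goldsmith \cite{Goldsmith75}), namely that any open book with connected binding is pulled back from the disk open book on $S^3$ by a $3$--fold simple branched cover whose branch locus is a braid. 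You instead try to prove that step directly, and that is where the gap lies.

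Two concrete problems with your equivariance step. First, the target you set, that $\phi$ ``descends to the identity on $D^2$,'' is both the wrong condition and essentially unachievable: a map compatible with $q$ must preserve the branch set, so the descended map is a mapping class of the disk with marked points, i.e.\ a braid $\beta$; the triviality of $\mathrm{MCG}(D^2,\partial D^2)$ is irrelevant because it forgets the marked points. What is actually needed is only that $\phi$ be isotopic to a lift of \emph{some} braid $\beta$, in which case the branch locus is the closed braid $\widehat{\beta}$. If instead $q\circ\phi=q$ rel the branch points, then $\phi$ is a deck transformation of the simple $3$--fold cover, which is irregular with trivial deck group, so $\phi$ would be isotopic to the identity --- excluding essentially every monodromy --- and the branch locus would be a trivial closed braid, far too restrictive to yield all contact $3$--manifolds. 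Second, the substantive claim --- that after sufficiently many Giroux stabilizations (with $q$ extended over the new annuli) the positive monodromy becomes a lift through $q$ of a braid --- is precisely the content of the Birman--Goldsmith--Montesinos--Morton liftability theorem; you flag it as ``the main obstacle'' but give no argument, so the core of the proof is missing. Either supply that argument or, as the paper does, invoke \cite{Montesinos-AmilibiaMorton91} after arranging the binding to be connected, which that theorem requires. Your final identification of the induced contact structure with $\xi$ via the supporting open book is fine and agrees with the paper.
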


For each contact 3--manifold $(M,\xi)$, Theorem \ref{giroux} asserts the existence of a transverse link $L(M,\xi)\subseteq (S^3,\xi_{std})$, depending on $(M,\xi)$, such that $(M,\xi)$ is the contact branched cover of $(S^3,\xi_{std})$ along $L(M,\xi)$. Our main result Theorem \ref{thm:univlink} shows that the transverse link $L(M,\xi)$ can be chosen to be independent of $(M,\xi)$, thus encoding all the complexity of the contact structure on $(M,\xi)$ in the coverings of $S^3\setminus L$.

Let us briefly recall the argument for Theorem \ref{giroux}. E.~Giroux first shows that any contact 3--manifold is supported by an open book decomposition, whose binding can be assumed to be connected. Building on work of J.S.~Birman \cite{Birman79} and D.L.~Goldsmith \cite{Goldsmith75}, J.M.~Montesinos and H.R.~Morton showed in \cite{Montesinos-AmilibiaMorton91} that any open book decomposition on a smooth 3--manifold $M$ with connected binding is pulled back from the open book decomposition of $S^3$ with disk page by a 3--fold branched cover map $p:M\to S^3$ whose branch locus is a braid. Since a braid is naturally a transverse knot the branched cover can be taken to be a contact branched cover and it is easy to see that the induced contact structure on the covering space is supported by the original open book, thus concluding the statement.

We end this subsection with two simple observations. The first follows immediately from the definition of contact branched cover. 
\begin{lemma}\label{composition}
Suppose $p:(M',\xi')\to (M,\xi)$ is a contact branched covering map with branch set $B\subset M$. Let $L$ be a link in $M$ disjoint from $B$ and $L'=p^{-1}(L)$. If $p':(M'',\xi'')\to (M',\xi')$ is a contact branched covering map with branch set $L'$, then $p'\circ p:(M'',\xi'')\to (M,\xi)$ is a contact branched covering map with branch set $B\cup L$. \hfill\qed
\end{lemma}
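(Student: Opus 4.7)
The plan is to verify two things: first, that the composition $p \circ p' : M'' \to M$ is smoothly a branched covering map with branch locus $B \cup L$; second, that the contact structure $\xi''$ on $M''$ coincides, up to contact isotopy, with the contact structure produced by the Geiges--\"Ozt\"urk--Niederkr\"uger construction applied to this composition along $B \cup L$. The key topological observation, used throughout, is that since $B$ and $L$ are disjoint in $M$, the preimage $L' = p^{-1}(L)$ is disjoint from $p^{-1}(B)$ in $M'$.

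For the smooth part I would analyze the local model of $p \circ p'$ at an arbitrary $x \in M''$ according to the location of $(p \circ p')(x)$. If $(p \circ p')(x) \notin B \cup L$, then $p'$ is a local diffeomorphism near $x$ and $p$ near $p'(x)$, so the composition is a local diffeomorphism. If $(p \circ p')(x) \in B$, then $p'(x) \in p^{-1}(B)$ which, by the key observation, is disjoint from $L'$, so $p'$ is a local diffeomorphism near $x$ and the local branching of $p$ of order $m$ at $p'(x)$ transfers directly to $p \circ p'$ at $x$. If $(p \circ p')(x) \in L$, then $p'(x) \in L'$ and $p$ is a local diffeomorphism near $p'(x)$, so the branching of $p'$ of order $n$ at $x$ becomes the branching of the composition. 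Hence $p \circ p'$ is a smooth branched cover with branch locus $B \cup L$ and $(p \circ p')^{-1}(B \cup L) = (p')^{-1}(p^{-1}(B)) \sqcup (p')^{-1}(L')$.

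For the contact part, fix a contact form $\alpha$ for $\xi$ such that $B \cup L$ is transverse to $\ker \alpha$. By construction, $\xi'$ is represented by a contact form $\alpha'$ obtained by deforming $p^*\alpha$ in an arbitrarily small neighborhood $U$ of $p^{-1}(B)$, which by the key observation I may take disjoint from $L'$. Thus $\alpha' = p^*\alpha$ on a neighborhood of $L'$, so $L'$ remains transverse to $\xi'$, and $\xi''$ is represented by a contact form $\alpha''$ obtained by deforming $(p')^*\alpha'$ in an arbitrarily small neighborhood $V$ of $(p')^{-1}(L')$, which I choose disjoint from $(p')^{-1}(p^{-1}(B))$. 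The resulting form $\alpha''$ agrees with $(p \circ p')^*\alpha$ outside $V \cup (p')^{-1}(U)$ and is an arbitrarily small deformation of $(p \circ p')^*\alpha$ on that set, which is itself an arbitrarily small neighborhood of the full branch locus $(p \circ p')^{-1}(B \cup L)$. By the uniqueness clause in the Geiges--\"Ozt\"urk--Niederkr\"uger theorem, $\xi''$ is therefore the contact structure produced directly by the branched cover construction for $p \circ p'$ along $B \cup L$, as required. The argument is essentially bookkeeping, and there is no substantive obstacle; the only point that requires care is choosing the deformation neighborhoods $U$ and $V$ to respect the disjointness of the two branch components, which is immediate from $B \cap L = \es$.
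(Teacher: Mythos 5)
Your proposal is correct and follows the same line the paper intends: the paper simply asserts the lemma ``follows immediately from the definition of contact branched cover,'' and your argument is exactly the routine verification (local models for the composition away from, over $B$, and over $L$, plus disjointness of the deformation regions and the uniqueness clause of the Geiges--\"Ozt\"urk--Niederkr\"uger theorem) that makes this immediate claim precise. Note also that you correctly write the composition as $p\circ p'$, whereas the paper's statement writes $p'\circ p$; with the stated sources and targets yours is the right order.
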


The second fact is also standard in contact topology.

\begin{lemma}\label{firstbc}
The contact manifold obtained from any branched cover of $(S^3,\xi_{std})$ branched along a transverse unknot with self-linking number $-1$ is contactomorphic to $(S^3,\xi_{std})$.
\end{lemma}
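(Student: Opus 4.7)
The plan is to recognize the transverse unknot $U$ with self-linking number $-1$ as the binding of the standard disk-page open book decomposition of $(S^3,\xi_{std})$ and then observe that any branched cover of $S^3$ along $U$ lifts this open book to one of the same type. Transverse unknots are classified up to transverse isotopy by their self-linking number, so $U$ is transversely isotopic to the binding of the open book of $S^3$ whose pages are disks and whose monodromy is the identity; this open book supports $\xi_{std}$, so we may choose a contact form $\alpha$ for $\xi_{std}$ adapted to it.

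Next, since $\pi_1(S^3\setminus U)\cong\Z$, any connected branched cover $p:M\to S^3$ with branch set $U$ must be an $n$-fold cyclic branched cover for some $n\geq 1$. Pulling the fibration $S^3\setminus U\to S^1$ back through $p$ yields a fibration $M\setminus p^{-1}(U)\to S^1$ with disk pages, and the branched cover local model $(\phi,z)\mapsto(n\phi,z^n)$ guarantees that this fibration extends across $p^{-1}(U)$ to an open book of $M$ with disk pages and connected binding. Any such open book has monodromy a diffeomorphism of $D^2$ fixing the boundary, which is isotopic to the identity by Alexander's trick, so $M\cong S^3$ and the pulled-back open book is the trivial disk-page open book.

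Finally, the induced contact structure $\xi_L$ is by construction obtained from $p^*\alpha$ by an arbitrarily small perturbation supported near $p^{-1}(U)$; away from this neighborhood $\xi_L=p^*\xi_{std}$ is already adapted to the lifted open book, and near $p^{-1}(U)$ the perturbed form matches the standard contact neighborhood model of a transverse binding. Hence $\xi_L$ is supported by the disk-page open book on $S^3\cong M$, and by the Giroux correspondence we conclude $(M,\xi_L)\cong(S^3,\xi_{std})$. The most delicate step will be verifying the adaptedness of the perturbed form to the lifted open book near $p^{-1}(U)$, which is essentially the content of the Geiges--\"Ozt\"urk--Niederkr\"uger construction applied along a transverse binding and can be made explicit in the local coordinates above.
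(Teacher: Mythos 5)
Your argument is correct in outline, but it takes a genuinely different route from the paper. You identify the self-linking number $-1$ unknot (via transverse simplicity of the unknot) with the binding of the disk-page open book, note that any connected branched cover over it is cyclic, lift the open book to the cover, observe that a disk-page open book has trivial monodromy so the cover is $S^3$, and then invoke the Giroux correspondence after checking that the induced contact structure is supported by the lifted open book. The paper instead argues with an explicit model: the complement of the self-linking $-1$ unknot is $(S^1\times\R^2,\ker(d\phi+r^2\,d\theta))$, any finite cover of this model is contactomorphic to itself, and regluing the standard neighborhood of the branch circle returns $(S^3,\xi_{std})$. The paper's route is shorter, completely explicit, and avoids both the Giroux correspondence and any discussion of adaptedness; yours is more conceptual and is essentially the mechanism behind Theorem~2.3 (contact branched covers along bindings and braids are supported by the pulled-back open book), so it generalizes beyond the unknot. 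The step you yourself flag is the real crux of your approach: $p^*\alpha$ degenerates along $p^{-1}(U)$, and the naive transverse-neighborhood form $d\phi+r^2\,d\theta$ is not literally adapted near a binding (its differential vanishes on the pages $\{\theta=\mathrm{const}\}$), so you must first choose $\alpha$ near $U$ of the form $h_1(r)\,d\phi+h_2(r)\,d\theta$ adapted to the open book and then verify in the local model --- which for a cyclic cover over the unknot is $(\phi,z)\mapsto(\phi,z^n)$ rather than $(n\phi,z^n)$, since the longitude lifts --- that the perturbed pullback remains adapted upstairs. This is a standard computation (cf.\ \cite{OzturkNiederkruger07,Casey13,EtnyreFurukawa17}), but it must be written out for your proof to be complete, and it is exactly what the paper's explicit-model argument sidesteps.
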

\begin{proof}
It is clear that smoothly the branched cover of $S^3$ branched over the unknot is diffeomorphic to $S^3$. Moreover if $U$ is the transverse unknot in the standard contact structure on $S^3$ with self-linking number $-1$ then $S^3-U=S^1\times \R^2$ with the contact structure $\xi=\ker(d\phi + r^2\, d\theta)$, where $\phi$ is the angular coordinate on $S^1$ and $(r,\theta)$ are polar coordinates on $\R^2$. Now any finite covering space of $(S^1\times \R^2, \xi)$ is contactomorphic to $(S^1\times \R^2, \xi)$, from which the result follows.
\end{proof}

The hypothesis on the self-linking number in Lemma \ref{firstbc} is meaningful, since branched covers along stabilized transverse knots are overtwisted.

\subsection{Monodromy representations}
A covering map $p:M\to Y$, for $M$ connected, is determined by its monodromy representation. Let $x_0\in Y$ be a fixed base point and label the points in $p^{-1}(x_0)=\{x_1,\ldots, x_n\}$. Then, given a loop $\gamma$ in $Y$ based at $x_0$ we can define the element $\sigma_\gamma$ of the symmetric group $S_n$ by $\sigma_\gamma(i)$ being the index on $\widetilde\gamma(1)$ where $\widetilde\gamma$ is the lift of $\gamma$ based at $x_i$. This defines a transitive representation of the fundamental group
\[
r:  \pi_1(Y)\to S_n=\Aut(p^{-1}(x_0)).
\]
Moreover, given such a representation the subgroup $G=\{g\in \pi_1(Y): r(g)(1)=1\}$ gives a covering space $M$ that corresponds to the representation. 

Let $p:M\to Y$ be a branched covering map with branch locus $L\sse Y$. By definition, the restriction $p|_{(M-p^{-1}(L))}:M-p^{-1}(L)\to Y-L$ is a covering space and, by the above discussion, this restriction is determined by a representation $\pi_1(Y-L)\to S_n$, with $n=|p^{-1}(x_0)|$ the cardinality of the fiber over any $x_0\in Y-L$. In turn, such a representation determines the initial branched cover map $p:M\to Y$. Indeed, the representation of the fundamental group determines a cover of $Y-L$ and, in addition, any covering of $Y-L$ can be uniquely extended to a branched covering map of $Y$ by using the local model
$$L\times\C\to L\times\C,\quad(\theta,z)\mapsto (m\theta,z^n),$$
near the branch set, where the target $L\times\C$ is a sufficiently small neighborhood of $L\sse Y$ and $\C$ provides normal coordinates for this inclusion. The domain $L\times\C$ models part of a sufficiently small neighborhood of $p^{-1}(L)\sse M$. See \cite[Section 2.6]{EtnyreFurukawa17} and \cite{Geiges97} for further details.

In the case that $Y$ is smoothly $S^3$, the fundamental group $\pi_1(Y-L)$ is well-known to be generated by meridians to $L$. In particular, given a diagram for $L$ we have the Wirtinger presentation for $\pi_1(Y-L)$ with generators $x_i$ corresponding to the strands in the diagram and relations coming from the crossings: $x_i=x_k^{-1}x_jx_k$, respectively $x_i=x_kx_jx_k^{-1}$, at a right handed, respectively left handed, crossing, where strand $k$ goes over incoming strand $j$ and outgoing strand $i$. Thus a branched covering map over $S^3$ with branch set $L$ is determined by labeling a diagram of $L$ with elements of $S_n$ that satisfy the required relations at the crossings. 

It is easy to see whether a branched covering map is simple using the monodromy representation: one just needs the image of the representation to consist only of transpositions, as higher length permutations correspond to more than two points coming together.

We now discuss some modifications one can make to the branch locus of a branched covering map without affecting the induced contact covering space. 
\begin{lemma}
Let $L$ be a transverse link in a contact 3--manifold $(Y,\xi)$ and $p\colon M\to Y$ be a branched covering map with branch locus $L$ inducing the contact structure $\xi'$ on $M$. If part of a diagram for $L$ is as shown on one side of a row in Figure~\ref{isotopy} then replacing that portion of $L$ with the other diagram shown in that row will result in a new contact branched covering of $Y$ that still yields the same contact manifold $(M,\xi')$. 
\end{lemma}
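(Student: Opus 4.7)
The plan is to localize each row of Figure~\ref{isotopy} and appeal to the uniqueness portion of the Geiges--Öztürk--Niederkrüger construction. Every move displayed in the figure is supported in a topological 3-ball $B \sse Y$: outside $B$ the branch link $L$ and its replacement $L'$ coincide, as do the strand labels in $S_n$, so outside the preimage of $B$ the two branched covers and their induced contact structures are tautologically the same. It therefore suffices to exhibit a contactomorphism between the two contact branched covers of $(B,\xi|_B)$ that extends the identity on a neighborhood of $\dd B$.

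The smooth step---that the two branched covers of $B$ are diffeomorphic rel boundary---is classical and can be read off the Wirtinger presentation of $\pi_1(B - L \cap B)$: the two local pictures in each row of Figure~\ref{isotopy} induce the same monodromy on $\pi_1(\dd B - L \cap \dd B)$, so they produce the same cover of $\dd B$ and hence, after filling in with the explicit local model near the branch set, the same covering space of $B$. This is exactly the content of the topological branched-cover moves of \cite{HildenLozanoMontesinos83a, Montesinos-AmilibiaMorton91}. Since both $L \cap B$ and $L' \cap B$ are transverse tangles (verified by inspecting each row of Figure~\ref{isotopy} against the forbidden pictures of Figure~\ref{forbid}), the Geiges--Öztürk--Niederkrüger theorem then produces, on each cover, a contact structure well-defined up to contact isotopy once the transverse branch locus is fixed, given by a small perturbation of the pullback contact form supported arbitrarily close to the preimage of the branch locus.

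To match these two contact structures I would observe that they agree tautologically on a neighborhood of the preimage of $\dd B$ and that, on each connected component of the preimage of $B$, the branched covering is of the type appearing in Lemma~\ref{firstbc}: a 3-ball containing at most one transverse arc of self-linking $-1$. Each interior piece is therefore contact-geometrically standard, and the boundary characteristic foliations on the two sides match, so the two contact structures on $p^{-1}(B)$ are isotopic rel boundary. The main obstacle, and the step that requires the most care, is this last identification: tightness of the induced contact structure on each local piece is not automatic---branched covers of tight structures can be overtwisted, as the remark after Lemma~\ref{firstbc} recalls---but in the local situation of each move in Figure~\ref{isotopy} it is forced by direct comparison with the models of Lemma~\ref{firstbc}, and the remaining glueing is then an application of the uniqueness of tight contact balls with prescribed characteristic foliation on the boundary.
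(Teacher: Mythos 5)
Your overall architecture matches the paper's: localize each move in a ball $B$, observe the two tangles and their labels agree near $\dd B$, argue that each component of the preimage of $B$ is a ball carrying a tight contact structure, and conclude by uniqueness of tight contact structures on the ball rel boundary. (The paper in fact dispatches the first two rows of Figure~\ref{isotopy} even more simply, as transverse isotopies of the branch locus, which induce isotopies of the entire branched covering; the ball argument is only invoked for the third and fourth rows.)

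However, two of your steps do not hold up as stated. First, the inference ``the two pictures induce the same monodromy on $\pi_1(\dd B - L\cap\dd B)$, hence the same covering space of $B$'' is not valid in general: branched covers of a ball along different tangles with identical boundary data can be very different 3--manifolds --- this is precisely why such moves require proof --- and what must actually be shown is that each component of the preimage of $B$ is a ball on both sides of the move. Second, and more seriously, your tightness step misidentifies the local models. For the fourth-row move the strands carry the transpositions $(i\,j)$ and $(j\,k)$, which generate a symmetric group on the three sheets $i,j,k$; the relevant component of the cover of $B$ is therefore a 3--fold \emph{simple} branched cover of the ball along a tangle, and upstairs it is branched along more than one arc. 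It is not ``a 3--ball containing at most one transverse arc'' and not of the cyclic type appearing in Lemma~\ref{firstbc}, so no direct comparison with that lemma yields tightness there. Establishing that this piece is a tight ball (on both sides of the move) is exactly the nontrivial input, which the paper imports from \cite{Casey13}. Your comparison with Lemma~\ref{firstbc} does handle the third row, where the transpositions $(i\,j)$ and $(k\,l)$ are disjoint and each nontrivial component is a double cover branched along a single trivial transverse arc, but as written your argument leaves the key fourth-row case unproved.
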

\begin{figure}[htb]{\small
\begin{overpic}%[grid,tics=10] 
{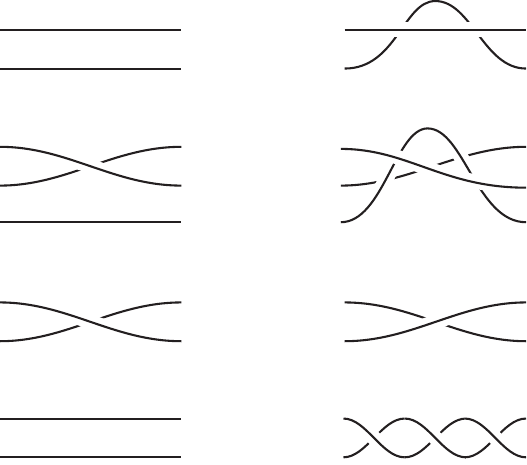}
% --------- Row 1 ---------
\put(-20, 187){$(i\, j)$}
\put(-20, 207){$(j\, k)$}%95+112
\put(90, 187){$(i\, j)$}
\put(90, 207){$(j\, k)$}
\put(145, 187){$(i\, j)$}
\put(145, 207){$(j\, k)$}
\put(201, 227){$(i\, k)$}
\put(255, 187){$(i\, j)$}
\put(255, 207){$(j\, k)$}

% --------- Row 2 ---------
\put(-20, 149){$(i\, j)$}
\put(-20, 130){$(i\, j)$}
\put(-20, 112){$(j\, k)$}
\put(90, 149){$(i\, j)$}
\put(90, 130){$(i\, j)$}
\put(90, 112){$(j\, k)$}
% ---------
\put(145, 147){$(i\, j)$}
\put(145, 130){$(i\, j)$}
\put(145, 112){$(j\, k)$}
\put(199, 165){$(i\, k)$}
\put(188, 128){$(i\, k)$}
\put(256, 112){$(j\, k)$}
\put(256, 149){$(i\, j)$}
\put(256, 130){$(i\, j)$}
\put(197, 148){$(j\, k)$}

% --------- Row 3 ---------
\put(-20, 57){$(i\, j)$}
\put(90, 75){$(i\, j)$}
\put(-20, 75){$(k\, l)$}
\put(90, 57){$(k\, l)$}
\put(147, 57){$(i\, j)$}
\put(256, 57){$(k\, l)$}
\put(147, 75){$(k\, l)$}
\put(256,75){$(i\,j)$}

% --------- Row 4 ---------
\put(-20,1){$(i\, j)$}
\put(-20, 19){$(j\, k)$}
\put(90,1){$(i\, j)$}
\put(90, 19){$(j\, k)$}
\put(146, 1){$(i\, j)$}%19+128
\put(146, 19){$(j\, k)$}
\put(188, 26){$(i\, k)$}
\put(256, 2){$(i\, j)$}
\put(256, 19){$(j\, k)$}
\end{overpic}}
\caption{Replacing a portion of the branch locus of a simple cover with one of the figures in a row with the other does not change the manifold or contact structure described by the contact branched cover. All strands are oriented from left to right.}
\label{isotopy}
\end{figure}
\begin{proof}
The changes to the diagram depicted in the first two rows of Figure~\ref{isotopy} correspond to contact isotopies of the underlying branch locus. This induces an isotopy of the branch cover map, thus does not change the branched cover or the contact structure. 

The change in the fourth row was proven to leave the contact branched cover unchanged in \cite{Casey13} and can easily be seen by observing that the branched cover of the ball containing either branched loci is simply a ball and the contact structure on it is tight. See \cite{Casey13} for details. 

That the change in the third row does not affect the contact branched cover follows exactly as for the fourth row. That is, the pre-image by the covering map of a ball containing the diagram on either side is a union of balls and the contact structure on each ball is tight. 
\end{proof}

There is a second simple modification to the transverse branched locus which preserves the contact isotopy type of the branched cover, it adds a standard transverse unknot to the branch locus at the cost of increasing the degree of the branched cover:

\begin{lemma}\label{add}
Let $L$ be a transverse link in a contact 3--manifold $(Y,\xi)$ and $p\colon M\to Y$ be an $n$--fold branched covering map with branch locus $L$ inducing the contact structure $\xi'$ on $M$. Let $L'$ be the link consisting of $L$ together with a maximal self-linking unknot $U$ that is contained in a ball that is disjoint from $L$.

The monodromy data for $L'$ given by using the monodromy data on $L$ and labelling the meridian of $U$ by $(n,n+1)$ defines an $(n+1)$--fold branch covering map from $M$ to $Y$ that induces the same contact structure $\xi'$ on $M$. 
\end{lemma}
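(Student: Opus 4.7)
My plan is to describe the $(n+1)$-fold branched cover $p'\colon M'\to Y$ produced by the given monodromy data and identify $M'$ with $M$, both topologically and contactly. First, since $U$ sits in a ball $B$ disjoint from $L$, van Kampen gives $\pi_1(Y\setminus(L\cup U))\cong\pi_1(Y\setminus L)\ast\Z$, with the new free factor generated by the meridian of $U$. The prescribed assignment --- original monodromy on the meridians of $L$, viewed in $S_n\hookrightarrow S_{n+1}$ as the stabilizer of $n+1$, together with $(n,\,n+1)$ on the meridian of $U$ --- therefore extends consistently to a homomorphism $\pi_1(Y\setminus(L\cup U))\to S_{n+1}$, and by the standard correspondence recalled in this subsection it defines an $(n+1)$-fold branched cover $p'\colon M'\to Y$.

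Next I would decompose $M'$ using $Y=B\cup(Y\setminus B)$. Over $Y\setminus B$ the monodromy lands in $S_n\subseteq S_{n+1}$, so $(p')^{-1}(Y\setminus B)=(M\setminus p^{-1}(B))\sqcup(Y\setminus B)$, namely the original cover together with one extra disjoint ``trivial'' sheet. Over $B$, the transposition $(n,\,n+1)$ fixes $1,\ldots,n-1$ pointwise, so $(p')^{-1}(B)$ is the disjoint union of $n-1$ trivial copies of $B$ with the connected $2$-fold branched cover of $B$ along $U$.

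Here Lemma \ref{firstbc} does the essential work. Applied to the whole of $(S^3,\xi_{std})$ it says the $2$-fold branched cover of $(S^3,\xi_{std})$ along $U$ is again $(S^3,\xi_{std})$; restricting that covering to the Darboux ball $B$ realises the $2$-fold branched cover of $B$ along $U$ as $(S^3,\xi_{std})$ with two disjoint Darboux balls removed --- an $S^2\times I$ with its induced tight contact structure. Its two $S^2$ boundary components glue, respectively, to the boundary of the removed sheet $B_n\subseteq M$ and to the boundary of the extra sheet $Y\setminus B$. But the combined piece ``$S^2\times I$ glued to the Darboux ball $Y\setminus B$'' is precisely $(S^3,\xi_{std})$ with one Darboux ball removed, which is itself a Darboux ball (this is just the decomposition $(S^3,\xi_{std})=B\cup(S^3\setminus B)$ read from the cover side). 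The net effect on $M$ is therefore to remove the Darboux ball $B_n$ and glue a Darboux ball back in, so $M'\cong M$ with its original contact structure $\xi'$.

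The main point to verify is really the contact-geometric matching on the $S^2\times I$ piece, which is exactly what Lemma \ref{firstbc} provides; the smooth identification $M'\cong M$ is then immediate, and the compatibility with $\xi'$ follows because every piece of the reassembly is standard inside $(S^3,\xi_{std})$.
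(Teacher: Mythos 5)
Your decomposition is essentially the paper's own argument in different packaging: split $Y$ along the sphere $\partial B$, note that over $Y\setminus B$ the new cover is the old cover plus one trivial sheet, and use Lemma~\ref{firstbc} to see that over $B$ one gets $n-1$ trivial balls together with the double branched cover of $(B,U)$, a tight $S^2\times I$ cut out of $(S^3,\xi_{std})$. Up to that point the computation is correct (the paper phrases the same thing as a contact connected sum of the covers of the two capped-off pieces).

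The problem is the reassembly step, where you refer to ``the Darboux ball $Y\setminus B$''. For a general contact $3$--manifold $(Y,\xi)$, as in the statement, $Y\setminus B$ is the complement of a ball in $Y$ and is not a ball; the extra sheet is a copy of $Y\setminus B$. What your gluing actually produces is $M'\cong (M\setminus B_n)\cup (S^2\times I)\cup (Y\setminus B)$, i.e.\ the contact connected sum $(M,\xi')\,\#\,(Y,\xi)$, and this is $(M,\xi')$ precisely when $(Y,\xi)\cong(S^3,\xi_{std})$. That is in fact the only case the paper ever uses (Step~3 of the proof of Lemma~\ref{semiuniversal}), and the paper's own proof quietly specializes to it as well --- its ``simple observation'' cuts $S^3$, not $Y$, and it concludes with connected sums of standard spheres. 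So your argument, like the paper's, is fine for base $(S^3,\xi_{std})$; but as a proof of the lemma in its stated generality the sentence identifying $Y\setminus B$ with a Darboux ball is where it breaks, and the gap is not repairable, since for $Y\not\cong S^3$ the resulting cover genuinely is $M\# Y\not\cong M$. If you want to present the argument cleanly, either state the lemma for $(S^3,\xi_{std})$, or state the general conclusion as $(M,\xi')\,\#\,(Y,\xi)$ and specialize; in either case also say a word (Eliashberg's uniqueness of the tight ball) about why removing the ball $B_n$ and regluing a standard tight ball returns $(M,\xi')$ on the nose.
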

\begin{proof}
We begin with a simple observation. Let $K=K_0\cup K_1$ be a link with monodromy data for an $n$--fold branched covering map, such that there is a sphere $S$ that separates $K_0$ and $K_1$. We can assume that $S$ is a convex sphere and cut $S^3$ into two contact 3--balls $B_0$ and $B_1$ along $S$ so that $B_i$ contains $K_i$. Then cap off $B_i$ by a tight contact 3--ball $C_i$ to get a tight contact 3-sphere $S^3_i$ that contains $K_i$. Let $p_i:M_i\to S^3_i$ be the $n$--fold branched covering map determined by $K_i$ and its monodromy data. Note that the pre-image $p_i^{-1}(C_i)$ consists of $n$ distinct 3--balls. Moreover if $p:Y\to S^3$ is the branched covering map corresponding to $K$ then $Y\setminus p^{-1}(S)$ is contactomorphic to $(Y_0\setminus p_0^{-1}(C_0))\cup (Y_1\setminus p_1^{-1}(C_1))$. In consequence, the contact manifold $Y$ is the contact $n$--fold, possibly internal, connect sum of $Y_0$ and $Y_1$. 

Now given the link $L'=L\cup U$ in the lemma notice that $Y_0=Y\cup S^3$ since $Y_0$ will be an $(n+1)$--fold cover of $S^3$ branched along $L$, where the monodromy data for $L$ only has labels between $1$ and $n$, so the cover is disconnected and the $(n+1)$--sheet is disjoint from the rest. Similarly $Y_1$ is the union of $n$ copies of 3--spheres by Lemma~\ref{firstbc}. It then follows that the $(n+1)$--fold contact connected sum of $Y_0$ and $Y_1$ results in a manifold contactomorphic to $Y$. 
\end{proof}

\section{Transverse universal links}

In this section we prove Theorem \ref{thm:univlink} by showing the link $L$ in Figure~\ref{tul} is a universal transverse link $L\sse (S^3,\xi_{std})$. The proof will follow from two following lemmas.
\begin{lemma}\label{setup}
Let $L'$ be the link shown in Figure~\ref{torusuniv}.
There is a branched cover
$$p:(S^3,\xi_{std}) \to (S^3,\xi_{std}),$$
branched along the Hopf link discussed at the end of Section~\ref{transknots} such that the link $L_{m,n}$ shown in Figure~\ref{norm} is a sub-link of $p^{-1}(L')$.
\end{lemma}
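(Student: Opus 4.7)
The plan is to exploit the presentation of $S^3\sm H\cong T^2\times(0,1)$ recalled at the end of Section~\ref{transknots}, where $H$ is the standard Hopf link whose two components are transverse unknots of self-linking number $-1$. Under this identification, the link $L'$ of Figure~\ref{torusuniv} is viewed as a transverse link in the thickened torus, specified by its torus front projection, and $L_{m,n}$ of Figure~\ref{norm} is an analogous grid-like collection of curves drawn in the same model.

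The first step is to build $p$. Let $q:T^2\to T^2$ be the unbranched cover of the torus associated to the sublattice $m\Z\oplus n\Z\sse\Z\oplus\Z=\pi_1(T^2)$, and thicken it to a covering map $q\x\op{id}$ of $T^2\times(0,1)$ onto itself. The pullback of the contact form $\cos(\pi t/2)\,d\phi+\sin(\pi t/2)\,d\theta$ is again a contact form of the same type, so $q\x\op{id}$ extends uniquely across each component of $H$ to a contact branched covering $p$ with ramification index $m$ along one component of $H$ and $n$ along the other. Factoring $p$ as the $m$-fold branched cover along the first component of $H$ followed by the $n$-fold branched cover along the lift of the second, Lemma~\ref{firstbc} applied at each stage identifies the total space with $(S^3,\xi_{std})$.

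The second step is to identify $p^{-1}(L')$. Since $p$ restricted to $T^2\times(0,1)$ is the product of $q$ with the identity on the $t$-factor, the preimage of $L'$ is obtained by lifting the front diagram of $L'$ through the torus cover, equivalently by tiling the $m\x n$ fundamental domain of $q$ with copies of the fundamental-domain picture of $L'$ and reading off the resulting transverse curves in the covering torus. The defining feature of Figure~\ref{torusuniv} is that $L'$ has been chosen precisely so that this tiling procedure yields the grid pattern of $L_{m,n}$ drawn in Figure~\ref{norm}, together with possibly some additional transverse curves disjoint from $L_{m,n}$ coming from the parts of $L'$ not used in the assembly. Thus $L_{m,n}$ is realized as a sub-link of $p^{-1}(L')$.

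The main obstacle, and really the only one, is the pictorial verification of this last step: one must match the explicit lifted front of $L'$ with the diagram of $L_{m,n}$ and verify that the transversality condition on torus fronts described in Section~\ref{transknots} (no tangent vector with both $\phi$ and $\theta$ components negative) is preserved under the cover $q\x\op{id}$. Given the correct fundamental-domain description of $L'$, this is a direct diagrammatic check rather than a conceptual difficulty, since $q$ is linear and hence preserves the signs of tangent directions on the torus.
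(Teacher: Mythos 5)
Your proposal is correct and follows essentially the same route as the paper: the paper also takes the cover of $T^2\times(0,1)\cong S^3-H$ that unwinds the $\theta$- and $\phi$-circles $m$ and $n$ times, extends it to a branched cover of $S^3$ along the Hopf link, and observes that by construction the lift of $L'$ contains $L_{m,n}$. The only (harmless) difference is how you verify that the induced contact structure upstairs is standard: the paper notes directly that $(p_{m,n})^*\xi_{std}$ is contactomorphic to $\xi_{std}$, whereas you factor $p$ into two branched covers along self-linking $-1$ unknots and apply Lemma~\ref{firstbc} twice, which works equally well.
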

\begin{figure}[h]
\tiny
\begin{overpic}%[grid,tics=10] 
{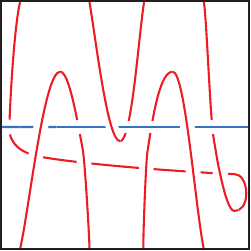}
%\put(-5,72){$r=$}
\end{overpic}
\caption{The front projection of the link $L'$ depicted in the contact manifold $(T^2\times (0,1),\xi_\std)\cong(S^3-H,\xi_\std)\sse(S^3,\xi_\std)$.}
\label{torusuniv}
\end{figure}
\begin{figure}[h]
\tiny
\begin{overpic}%[grid,tics=10] 
{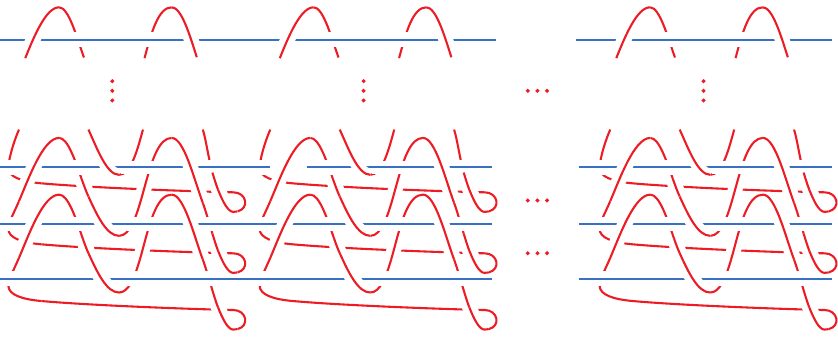}
%\put(-5,72){$r=$}
\end{overpic}
\caption{The transverse link $L_{m,n}$ in Lemma \ref{setup}. There are $n$ horizontal lines that are closed into a trivial braid and there are $m$ vertical columns of clasping unknots. Lemma \ref{semiuniversal} states that for any contact manifold $(Y,\xi)$ there is a choice of $n,m\in\N$ such that $(Y,\xi)$ is the contact branched cover of $(S^3,\xi_{std})$ over this transverse link.}
\label{norm}
\end{figure}  
\begin{lemma}\label{semiuniversal}
Any contact 3-manifold $(Y,\xi)$ can be realized as a contact branched cover of $(S^3,\xi_{std})$ branched along the transverse link $L_{n,m}$ shown in Figure~\ref{norm}, for some $n$ and $m$.
\end{lemma}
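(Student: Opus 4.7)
The plan is to combine Theorem \ref{giroux} with Bennequin's braid theorem for transverse links and then to realize the Giroux cover of $(Y,\xi)$ as the second stage of a composite branched cover over $L_{n,m}$, with the first stage being an auxiliary cover over the clasping unknots.

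First, Theorem \ref{giroux} expresses $(Y,\xi)$ as a 3-fold simple contact branched cover $\pi\colon (Y,\xi)\to (S^3,\xi_{std})$ along some transverse link. By Bennequin's theorem, this link is transversally isotopic to a closed transverse braid $B$ on $n$ strands contained in $T^2\times (0,1)\subset S^3$; let $m$ be the length of a braid word representing $B$. I interpret $L_{n,m}$ as the disjoint union of the trivial $n$-braid (its $n$ horizontal strands) together with $m$ clasping unknots $U_1,\ldots,U_m$, one per column, where each $U_k$ is a transverse unknot of self-linking $-1$ encircling an adjacent pair of horizontal strands.

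The key geometric step is to construct a simple branched cover $p\colon (S^3,\xi_{std})\to(S^3,\xi_{std})$ over $\bigcup_k U_k$ such that (i) the upstairs is again contactomorphic to $(S^3,\xi_{std})$, by iterated application of Lemma \ref{firstbc} through Lemma \ref{composition}, and (ii) the preimage $p^{-1}$ of the $n$ horizontal strands is transversally isotopic to $B$. A local analysis around each $U_k$ should show that under transposition monodromy the preimage of the two horizontal strands encircled by $U_k$ is a single strand with one half-twist, of sign determined by the clasping sign; choosing each $U_k$'s pair of encircled strands and sign to match the braid word of $B$ then gives (ii).

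Finally, I invoke Lemma \ref{composition}: the Giroux cover $\pi$ is a contact branched cover of the upstairs $(S^3,\xi_{std})$ of $p$, branched over $p^{-1}(\text{horizontal strands})=B$, which is disjoint from $\bigcup_k U_k$. The composition is then a contact branched cover $(Y,\xi)\to (S^3,\xi_{std})$ whose branch locus is $\bigcup_k U_k\cup(\text{horizontal strands})=L_{n,m}$. The main obstacle is the local half-twist statement used in (ii): one must carefully verify that transposition monodromy around a clasping unknot $U_k$ produces exactly the desired half-twist between the encircled strands, including the correct sign. Once this is secured, the remaining assembly — using Lemmas \ref{composition}, \ref{firstbc}, and, if necessary, \ref{add} to pad the degree so that all monodromies sit in a common symmetric group — is routine.
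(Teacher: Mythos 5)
Your reduction to Giroux's theorem and the intent to end with Lemma~\ref{composition} are fine, but the heart of your argument --- the existence of a branched cover $p\colon(S^3,\xi_{std})\to(S^3,\xi_{std})$ branched only over the clasping unknots with properties (i) and (ii) --- is exactly where the proof has to happen, and neither property is established nor plausible as described. For (i), Lemma~\ref{firstbc} concerns a single unknot; a cover branched over a union of unknots need not be $S^3$ at all (the double cover branched over the two--component unlink is $S^1\times S^2$), so ``iterated application of Lemma~\ref{firstbc} through Lemma~\ref{composition}'' proves nothing without a specific choice of monodromies and an argument; the paper's Lemma~\ref{add} is the statement that makes one such addition work, and it does so only because the new sheet is glued along trivial pieces. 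For (ii), the claimed local statement is not correct as stated: branching over an unknot that encircles or clasps two strands does not insert twisting into their lifts the way a Dehn twist/surgery on that unknot would (for instance, in the cyclic cover branched over an unknot encircling two parallel closed strands, the lifts remain unlinked); and if the preimage of the two strands were ``a single strand'' doubly covering them, the strand count and the covering degree over the horizontal strands would be wrong, so that preimage could not be the $n$--strand Giroux braid $B$ carrying its prescribed simple $3$--fold monodromy. The classical fact you are implicitly reaching for (Hilden--Lozano--Montesinos, and Casey in the contact setting) is of a different nature: it is an equivalence of two simple branched covers of a ball with \emph{different branch loci} --- two arcs with a pair of positive crossings versus two trivial arcs plus a clasping unknot labelled by a transposition involving a new sheet --- not a factorization of the cover through an intermediate $S^3$ branched over the unknot alone. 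So the factorization strategy collapses precisely at the ``half-twist'' step you flag as the main obstacle; that step is not a routine verification but the actual content, and in this form it fails.

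The paper avoids any intermediate cover in this lemma: it keeps the single $3$--fold simple cover from Theorem~\ref{giroux} and modifies its branch braid in place by moves that preserve the contact branched cover (Figure~\ref{isotopy}). Concretely: first arrange that the strands at every crossing carry three distinct transposition labels; then make all crossings positive and occurring in pairs (row four of Figure~\ref{isotopy}); then, for each pair of positive crossings, use Lemma~\ref{add} to introduce an unknot labelled $(j\,l)$ in a new sheet and the row-three move to trade the crossing pair for a clasping unknot (Figure~\ref{clasp}); finally pad with identity-labelled clasping unknots to reach exactly $L_{n,m}$. A composition of covers does appear in the paper, but only later (Lemma~\ref{setup}, branching over the Hopf link), and there the covering merely replicates a fixed pattern by unwinding the two torus directions --- it never has to create crossings, which is what your map $p$ would be required to do.
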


Lemmas \ref{setup} and \ref{semiuniversal}, which will be momentarily proven, suffice to conclude our main result.

\begin{proof}[Proof of Theorem~\ref{thm:univlink}]
Let $(M,\xi)$ be any given contact 3--manifold, we can apply Lemma~\ref{semiuniversal} to construct a branched covering map $p':(M,\xi)\to (S^3,\xi_{std})$ with branched set $L_{n,m}$. By Lemma~\ref{setup} there is also a branched covering map $p:(S^3,\xi_{std}) \to (S^3,\xi_{std})$ with branched set the green Hopf link $H$ depicted in Figure~\ref{tul} such that $L_{n,m}$ is a sub-link of $p^{-1}(L')$, where $L'\subseteq T^2\times (0,1)\simeq S^3\setminus H$ is the link presented in Figure~\ref{torusuniv}. Then Lemma~\ref{composition} implies that the composition $p'\circ p: (Y,\xi)\to (S^3,\xi_{std})$ is a contact branched covering map with branch set the transverse link $L'\subseteq (T^2\times (0,1),\xi_\std)\cong(S^3-H,\xi_\std)$ union the transverse Hopf link $H\subseteq (S^3,\xi_\std)$.

In order to conclude Theorem~\ref{thm:univlink} it suffices to identify the link in Figure~\ref{torusuniv} with the red and blue sub-link in Figure~\ref{tul}. This will be done in two steps. The first step is to show that the link in $(S^3,\xi_\std)$ shown in Figure~\ref{torusuniv} is the same as the link in $(\R^3, \xi_{rot}=\ker \{dz+xdy-ydx\})$ shown in Figure~\ref{fig:ULink}, where $(\R^3,\xi_{rot})$ is contactomorphic to the complement of a point in $(S^3,\xi_\std)$ disjoint from the link. Then in the second step we explain why the link in Figure~\ref{fig:ULink} is the link referred to in Theorem~\ref{thm:univlink}, depicted in Figure \ref{tul}, by identifying the contact structure $(\R^3,\xi_{std}=\ker\{dz-ydx\})$ with $(\R^3,\xi_{rot})$.
\begin{figure}[h]
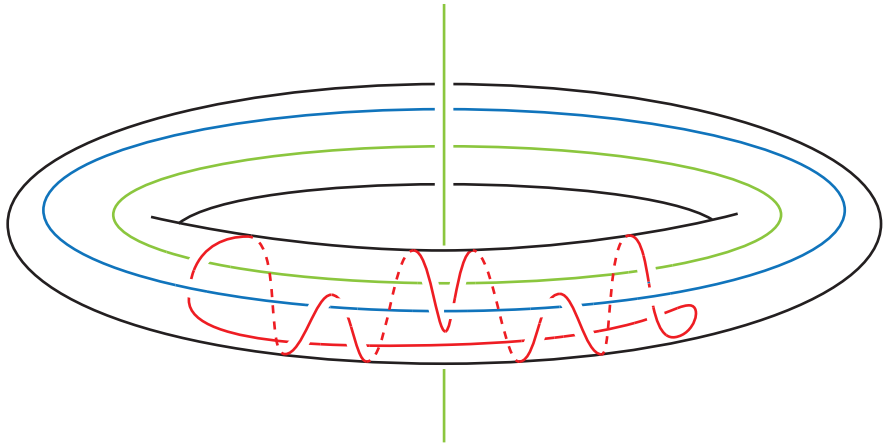

\tiny
\begin{overpic}{univeralblend}
\end{overpic}
\caption{The link $L'$ in $\xi_{rot}$}
\label{fig:ULink}
\end{figure}

For the first step, the stereographic coordinate map from $S^3-\{(0,0,1)\}$ to $\R^3$ pushes forward the contact structure $\xi_\std$ to the contact structure on $\R^3$ given by the kernel of the contact form $\alpha_1= r^2\, d\theta+ zr\, dr + \frac 12\left( 1+z^2-r^2\right)\, dz$,
where $(z,r,\theta)$ are cylindrical coordinates on $\R^3$. See the proof of \cite[Proposition 2.1.8]{Geiges08} for the detailed computation. The link in Figure~\ref{torusuniv} is mapped under the stereographic projection to the link in Figure~\ref{fig:ULink}. Given that the contact structure is $(\R^3,\ker\a_1)$, we need to apply a diffeomorphism to arrive at $(\R^3,\xi_{rot})$. It is shown in \cite[Page 57]{Geiges08} that the diffeomorphism 
\[
f:\R^3\longrightarrow\R^3,\quad f(r,\theta, z)=\left(r, \theta-z,\frac 12 z\left(1+\frac 13 z^2 + r^2\right)\right)
\] 
is a contactomorphism which pull-backs the contact structure $\xi_{rot}$ to $\ker \alpha_1$. Since the components of the link in Figure~\ref{fig:ULink}, with the exception of the vertical green line, can be assumed to have $r$-coordinate arbitrarily close to $1$ and $z$-coordinate arbitrarily close to $0$, we see that the transverse link in Figure~\ref{fig:ULink} represents the transverse link shown in Figure~\ref{torusuniv}. 

For the second step, we now need to identify the contact structures $(\R^3,\xi_{rot})$ and $(\R^3, \xi_{std})$. The contactomorphism from the first to the second is explicitly given by the diffeomorphism $\varphi(x,y,z)=(x,2y,xy+z)$.

It is a simple exercise, see for instance \cite[Figure 6]{KhandhawitNg10}, to show that the horizontal green and the blue curve in Figure~\ref{fig:ULink} map to the large green and blue curve in Figure~\ref{tul}. Similarly the vertical green curve, perturbed not to go through infinity, in Figure~\ref{fig:ULink} maps to the other green curve in Figure~\ref{tul}.

We must now see that the red curve in one figure maps to the red curve in the other. %, we consider the transverse link in Figure~\ref{fig:ULink}.
%in the solid torus, with $T^2\times(0,1)$ embedded in $(S^3,\xi_\std)$, see Section~\ref{transknots} for this embedding, which after stereographic projection becomes $(\R^3,\xi_{rot})$. 
For any given $\varepsilon\in\R^+$, the region containing the red component can be confined inside the region $|x|\leq \varepsilon$, and then we see that the contactomorphism $\varphi:(\R^3,\xi_{rot})\to(\R^3,\xi_{std})$ sends the red component of the transverse link in Figure \ref{fig:ULink} to the red component in the transverse link in Figure~\ref{tul}, as desired. 
\end{proof}

Let us now prove the two Lemmas \ref{setup} and \ref{semiuniversal}.

\begin{proof}[Proof of Lemma~\ref{setup}]
Let $p_{m,n}:(T^2\times (0,1))\to (T^2\times (0,1))$ be the covering map that unwinds the $\theta$-circle $m$ times and the $\phi$-circle $n$ times, where we are using the coordinates from Section~\ref{transknots}. This covering map smoothly extends to a branched covering map $p:S^3\to S^3$. Moreover, since $(p_{m,n})^*(\xi_{std})$ is contactomorphic to $\xi_{std}$, it is clear the contact structure induced on $S^3$ by this branched covering map is the standard contact structure. By construction, the link $L_{m,n}$ in Figure \ref{norm} is a sub-link of the pre-image $p^{-1}(L')$ of $L'$ via this explicit branched cover.
\end{proof}

\begin{proof}[Proof of Lemma~\ref{semiuniversal}]
Given a contact manifold $(M,\xi)$, we apply Theorem~\ref{giroux} to construct a 3--fold simple branched cover from $M$ onto $(S^3,\xi_{std})$ with branch locus a transverse braid $B$, such that $\xi$ is induced from the covering. Note that since the covering map is simple, the labels on the strands of $B$ are all transpositions.  We will now modify the transverse braid $B$ without changing the contact isotopy type of the branched cover $(M,\xi)$ until the $B$ has the desired form depicted in Figure \ref{norm}.
\begin{figure}[h]
\tiny
\begin{overpic}%[grid,tics=10] 
{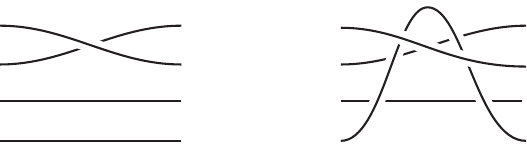}
\put(-16, 1){$(j\, k)$}
\put(-15, 21){$(i\, j$)}
\put(-15, 39){$(i\, j)$}
\put(-15, 57){$(i\, j)$}
\put(148, 1){$(j\, k)$}
\put(149, 21){$(i\, j$)}
\put(149, 39){$(i\, j)$}
\put(149, 57){$(i\, j)$}
\put(200, 14){$(i\, k)$}
\put(193, 38){$(i\, k)$}
\put(200, 71){$(i\, k)$}
\put(200, 54){$(j\, k)$}
\put(255, 1){$(j\, k)$}
\put(255, 21){$(i\, j)$}
\put(255, 39){$(i\, j)$}
\put(255, 57){$(i\, j)$}
\end{overpic}
\caption{Arranging all crossings to have multiple labelings on the strands.}
\label{multicolor}
\end{figure}  

\noindent
{\em Step 1: Change $B$ so that at each crossing the strands have three distinct labels.} The main idea in this step, described in detail below, is contained in Figure~\ref{multicolor}. Thinking of $B$ as the closure of a braid we assume $B$ has $n$ strands. We will think of a diagram for $B$ as $n$ horizontal strands the $xz$-plane with twists at distinct $x$ values, i.e.~ presented using standard generators of the braid group. Each of the $n$ different strands at any $x$ value must be labelled with at least two different transpositions in the symmetric group. If there is a single $x$ value where there is just one then all strands in the braid will be labelled by this transposition and the covering is not a 3--folding covering, but just a cyclic 2--fold covering.

\begin{figure}[h!]
\tiny
\begin{overpic}%[grid,tics=10] 
{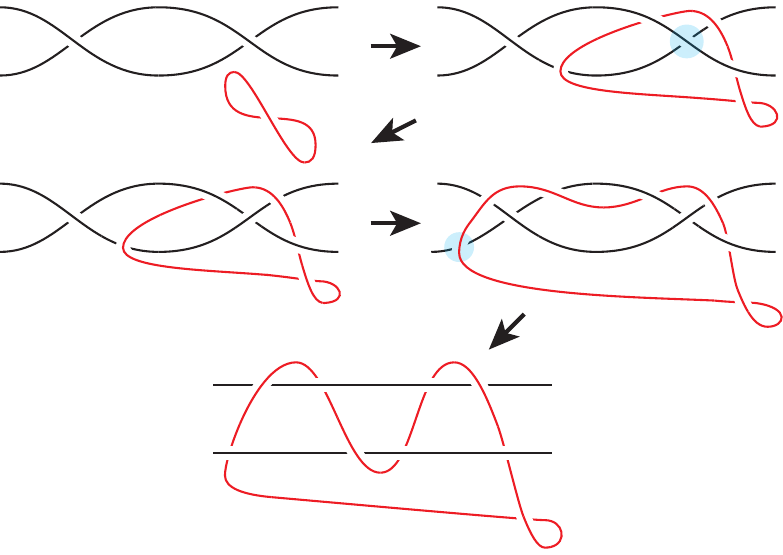}
\put(94, 218){$(j\, l)$}
\put(165, 229){$(i\, j)$}
\put(165, 263){$(i\, k)$}%242+21
\put(-15, 263){$(j\, k)$}
\put(-15, 229){$(i\, k)$}
\put(375, 229){$(i\, j)$}
\put(375, 263){$(i\, k)$}
\put(195, 263){$(j\, k)$}
\put(195, 229){$(i\, k)$}
\put(310, 214){$(j\, l)$}
\put(354, 243){$(i\, l)$}
\put(290, 236){$(k\, l)$}
\put(85, 130){$(j\, l)$}
\put(105, 146){$(k\, l)$}
\put(141, 163){$(i\, l)$}
\put(70, 183){$(i\, j)$}
\put(165, 145){$(i\, j)$}
\put(165, 179){$(i\, k)$}
\put(-15, 179){$(j\, k)$}
\put(-15, 145){$(i\, k)$}
\put(375, 145){$(i\, j)$}
\put(375, 179){$(i\, k)$}
\put(193, 178){$(j\, k)$}
\put(192, 145){$(i\, k)$}
\put(310, 115){$(j\, l)$}
\put(352, 161){$(i\, l)$}
\put(285, 138){$(k\, l)$}
\put(256, 163){$(i\, l)$}
\put(285, 182){$(i\, j)$}
\put(267, 47){$(i\, j)$}
\put(267, 80){$(i\, k)$}%242+21
\put(86, 80){$(j\, k)$}
\put(87, 47){$(i\, k)$}
\put(170, 16){$(j\, l)$}
\put(241, 65){$(i\, l)$}
\put(177, 86){$(k\, l)$}
\put(101, 65){$(j\, l)$}
\put(145, 65){$(j\, k)$}
\put(203, 65){$(i\, k)$}
\end{overpic}
\caption{Adding an unknotted component to the branch locus.}
\label{clasp}
\end{figure}

Now, let us suppose $c$ is a crossing in the transverse braid $B$. The strands at $c$ will either be labelled with three distinct transpositions or just one. In the former case, there is nothing to do, whereas in the latter case some other strand of the braid with the same $x$-value as the crossing must be labelled with a different transposition. Taking the strand with a different permutation that is closest to the crossing we can push it past the crossing as shown in Figure~\ref{multicolor}. This is done by a sequence of moves shown in the first two rows of Figure~\ref{isotopy} and results in a braid with distinct labels at each crossing. 

\noindent
{\em Step 2: Change $B$ so that it has only positive crossings and they occur in pairs.} We can apply the move shown in row four of Figure~\ref{isotopy} to each negative crossing in order to change it into a pair of two positive crossings. Note that we also need to use the move in row one to remove adjacent positive and negative crossings. For each unpaired positive crossing we can similarly add three more positive crossing and change it into two paired positive crossings. For convenience we can also isotope the diagram so that each pair of positive crossings lies in a distinct vertical strip of the projection.

\noindent
{\em Step 3: Change $B$ so that it is the trivial braid with linking unknots as in Figure~\ref{norm}.}
For each pair of positive crossings we can use Lemma~\ref{add} to introduce an unknot with label $(j\, l)$, where when considering this crossing we already have an $(l-1)$--fold branched cover and $j$ is chosen as shown in Figure~\ref{clasp}. Now Figure~\ref{clasp} shows a sequence of isotopies of the unknot and applications of the move in row three of Figure~\ref{isotopy} which manages to remove the pair of crossings at the expense of inserting a linked unknot. We will call such an unknot a {\it clasping unknot}.

If the braid after Step 2 had $k$ pairs of crossings, then after Step 3, there are $k$ clasping unknots and each one in a distinct vertical strip of the projection. Then adding $(n-2)$ new clasping unknots labelled with the identity permutation to the strands above and below each existing clasping unknot yields the link in Figure~\ref{norm}. Lemma \ref{semiuniversal} is now proven once we note that adding a knot to the branch locus whose strands are labelled with the identity permutation does not change the contact branched covering.
\end{proof}
\def\cprime{$'$} \def\cprime{$'$}

%\bibliography{references}
%\bibliographystyle{gtart}
%\bibliographystyle{plain}
\end{document}